\newif\ifAMS
\AMStrue\usepackage{amssymb}}
\theoremstyle{plain}
\newtheorem{theorem}{Theorem}[section]
\newtheorem{corollary}[theorem]{Corollary}
\newtheorem{question}{Question}
\newcommand {\Z}{{\mathbb Z}}
\newcommand {\N}{{\mathbb N}}
\newcommand {\R}{{\mathbb R}}
\title[Asymptotic topology of the lamplighter group]
 {Splittings and the asymptotic topology of the lamplighter group} 
\author{Panos Papasoglu}
\subjclass{20F65, 20E08 }
\email {papazoglou@maths.ox.ac.uk} 
\address
{Mathematical Institute, University of Oxford, 24-29 St Giles', Oxford, OX1 3LB, U.K.  }
\begin{document}
\maketitle

\begin{abstract}
It is known that splittings of finitely presented groups over
2-ended groups can be characterized geometrically. We show
that this characterization does not extend to all finitely
generated groups. Answering a question of Kleiner we show that
 the Cayley graph of the lamplighter group is coarsely
separated by quasi-circles.
\end{abstract}

\section{Introduction} 
\label{intro}

\noindent

Stallings \cite{St} showed that a finitely generated group has
more than one end if and only if it splits over a finite group.
This gives a geometric characterization of finitely generated
groups which split over a finite group, where geometric here means
in the sense of quasi-isometries.

One has a similar geometric characterization for splittings over
2-ended groups which applies to finitely presented groups. It was
shown in \cite{Pa} (see also \cite {Kl}) that a one ended finitely
presented group, which is not virtually a surface group, splits
over a 2-ended group if and only if its Cayley graph is coarsely
separated by a quasi-line. Another geometric characterization of
splittings of hyperbolic groups over 2-ended groups was given earlier by Bowditch \cite{Bo}. It is natural to ask whether the characterization given in
\cite{Pa} applies in fact to all one ended finitely generated
groups (as it is the case for Stallings' theorem).

We show here
that there is a one-ended finitely generated group (the lamplighter group)
which is coarsely separated by a quasi-line but does not split
over a 2-ended group. It turns out that the same group can be used to answer
a question of Kleiner (\cite {Pq}, problem 4.5).

\section{Preliminaries}

\subsection{Asymptotic Topology}
We recall some definitions relating to asymptotic topology.
The idea (due to Gromov) behind these definitions is that one can develop a large scale analog of topological notions. It is in a similar vein to large scale geometry except in asymptotic topology one tries to give a `rough' or `large scale' version of topological rather than geometric notions (see for example \cite{KK}, \cite{Dr}). Of course asymptotic topology notions are
`weaker' than asymptotic geometry notions, in particular they are invariant under quasi-isometries.

 A
(K,L)-{\it quasi-isometry} between two metric spaces $X,Y$ is a
map $f:X\to Y$ such that the following two properties are
satisfied:\newline 1) ${1\over K}d(x,y)-L\leq d(f(x),f(y))\leq
Kd(x,y)+L$ for all $x,y\in X$.\newline 2) For every $y\in Y$ there
is an $x\in X$ such that $d(y,f(x))\leq K$.\newline We will
usually simply say quasi-isometry instead of (K,L)-quasi-isometry.
Two metric spaces $X,Y$ are called quasi-isometric if there is a
quasi-isometry $f:X\to Y$.\newline A {\it geodesic metric space }
is a metric space in which any two points $x,y$ are joined by a
path of length $d(x,y)$. One turns a connected graph into a
geodesic metric space by giving each edge length 1.\newline
 We call a map $f:X\to Y$ between metric spaces a
{\it uniform embedding} (see \cite {Gr}) if the following two
conditions are satisfied:\newline 1) There are $K,L$ such that for
all $x,y\in X$ we have $d(f(x),f(y))<Kd(x,y)+L$.\newline 2) For
every $E>0$ there is $D>0$ such that $diam (A)<E\Rightarrow
diam(f^{-1}A)<D$.\newline If $f$
satisfies only condition 1 above we say that $f$ is a {\it coarse
lipschitz } map. We remark that a coarse lipschitz map is a uniform embedding if there is a function $h:\Bbb R ^+\to \Bbb R ^+$ such that for all
$A\subset Y$, $diam(f^{-1}A)\leq h(diam (A))$.
We say then that the {\it distortion } of $f$
is bounded by $h$. We remark that $f$ is a quasi-isometric embedding if its distortion is bounded by a linear function.\newline
It is easy to see that $f:X\to Y$ is a uniform embedding if for any two sequences $(a_n),\,(b_n)$ of $X$,
$d(a_n,b_n)\to \infty $ if and only if $d(f(a_n),f(b_n))\to \infty $.

 Although we will not need it in this paper we give one more characterization
of uniform embeddings which shows that it is a natural notion.
If $X$ is a metric space we say that $Y\subset X$ is large scale connected if there is an $R>0$, such that for any $x,y\in Y$ there is a finite sequence $x_0=x,x_1,...,x_n=y$ with $d(x_i,x_{i+1})\leq R$ for all $i$. If $Y$ is large scale
connected then for any sufficiently large $R$ one defines a metric $d_R$ on $Y$,
where $d_R(x,y)$ is the length of a shortest sequence $(x_i)$ joining $x,y$ and
such that $d(x_i,x_{i+1})\leq R$ for all $i$. Now the inclusion $Y\hookrightarrow X$ is a uniform embedding if for all $R,S$ sufficiently large the spaces $(Y,d_R), (Y,d_S)$ are quasi-isometric. 

If $K\subset X$ and $R>0$ we say that a component of $X-N_R(K)$ is
{\it deep } if it is not contained in $N_{R_1}(K)$ for any
$R_1>0$.\newline We say that $K$ {\it coarsely separates} $X$ if
there is an $R>0$ such that $X-N_R(K)$ has at least two deep
components. For example $X$ has more than one end if and only if $X$ is coarsely separated by a point.

Let $(X,d)$ be a geodesic metric space. If $R\subset X$ is path connected subset
of $X$, we denote the path metric of $R$ by $d_R$. We say that $R$ is a \textit{quasi-line} if $(R,d_R)$ is quasi-isometric to $\R$ and the inclusion
map $i:(R,d_R)\to (X,d)$ is a uniform embedding. We say that $R$ is a \textit{half-quasi-line} if $(R,d_R)$ is quasi-isometric to $\R ^+$.

Using this asymptotic notion of separation one may prove for example a generalization of Jordan's curve theorem for the plan (see e.g. \cite{KK}).

Kleiner has suggested the following asymptotic way to generalize the topological
notion of a separating simple closed curve. Let $X$ be a geodesic metric space.
Consider a sequence of simple closed curves $C_n$ in $X$ and denote by $d_n$ the path metric on $C_n$.  We say that $C_n$ is a \textit{sequence of quasi-circles}
if $diam\, C_n$ tends to infinity and there is a distortion function $h: \R ^+\to \R ^+$ such that $$d_n(a,b)\leq h(d(a,b))$$ for any $a,b$ lying on any $C_n$.

We say that a sequence of quasi-circles $C_n$ \textit{coarsely separates} $X$ if there is a $K$ (independent of $n$) such that $N_K(C_n)$ separates $X$ and at least two components of $X-N_K(C_n)$ are not contained in $N_n(C_n)$.

We may define similarly a sequence of quasi-intervals $I_n$ which coarsely separate $X$ by taking $I_n=[a_n,b_n]$ to be simple paths, rather than simple closed curves.

\subsection{Asymptotic topology and groups}
We remark that if $H$ is a finitely generated subgroup of a finitely generated group $G$ then the embedding $H\hookrightarrow G$ is a uniform
embedding (where we consider $H,G$ equipped with their word metrics).

It is easy to see that if a finitely generated group $G$ splits over a finitely generated group $H$ then $H$ coarsely separates the Cayley graph of $G$.
In the language of asymptotic topology Stallings ends theorem (\cite{St}) says that a finitely generated group $G$ splits over a finite group if and only if the Cayley graph of $G$ is coarsely separated by a point.

Clearly if a finitely generated group splits over a 2-ended group then a quasi-line coarsely separates the Cayley graph of $G$. It was shown in \cite{Pa}
that the converse also holds when $G$ is 1-ended, finitely presented and not
virtually a surface group. We will see in the next section that this characterization does not
generalize to finitely generated groups. 

Geometric characterizations of surface groups among finitely presented groups
or hyperbolic groups were used in \cite{Pa} and \cite{Bo} to show that splittings over 2-ended groups are invariant under quasi-isometries. Such
a characterization was a crucial ingredient also in the proof of the Seifert
conjecture by Mess \cite{Me}. It follows from \cite{Bos} that if a sequence
of quasi-circles coarsely separate the Cayley graph of a one-ended finitely presented
group $G$, then $G$ is virtually a surface group. We show in the
next section that the Cayley graph of the lamplighter group $L$ is coarsely
separated by quasi-circles, so the characterization above does not extend to
all one ended finitely generated groups.

\section{The asymptotic topology of the lamplighter group}

The lamplighter group $L$ is defined as
$$L=(\oplus _{ \Z}\Z _2)\rtimes \Z$$
where $\Z$ acts on the infinite direct sum $\oplus _{ \Z}\Z _2$
by sending the generator of the $i$-th factor, $x_i$ to the generator of
the next factor $x_{i+1}$ ($i\in \Z)$. It is easy to see that $L$ is in fact finitely generated and is given by the presentation
$$L=<a,t| a^2,[t^iat^{-i}, t^jat^{-j}],\, i,j\in \Z >$$
We will denote by $X$ the Cayley graph of $L$ with respect to the generators $a,t$.

\subsection{Elements of the lamplighter group and metric properties}
It is convenient to represent elements of $L$ geometrically: We
consider bi-infinite words of $0,1$'s indexed by $\Z$ and a
cursor. The identity element is given by a bi-infinite string of
$0$'s with the cursor at the position $0$ of $\Z$. Any other
element is represented by a string containing at most finitely
many $1$'s with the cursor at some position $k$.

One thinks of these strings of $0,1$'s as lamps at the integers on the real line. $0$ indicates a lamp which is off, while an $1$ indicates a lamp which is lit. The cursor is the lamplighter. If we think that it takes 1 unit of time for
the lamplighter to move from the position $n$ to the position $n+1$ (or $n-1$) and 1 unit of time to turn the switch on or off, then the distance betweeen the identity configuration (lamps off, cursor at $0$) to a given configuration is given by the time it takes the lamplighter to turn on the lamps and to move
to the position of the given configuration. For a more detailed exposition
of normal forms and the geometric representation of the elements of $L$ we refer to \cite{CT}.

\subsection{A separating quasi-line in the lamplighter group}
We show now that there is a half-quasi-line which coarsely separates the
Cayley graph of $L$ (problem 4.2 in \cite{Pq}). This contrasts with the case of finitely presented groups
where it is impossible for a half-quasi-line to coarsely separate the Cayley graph of a 1-ended group. 

\begin{theorem} \label{halfq}
There is a half-quasi-line $N$ which separates coarsely the Cayley graph of $L$, $X$.
\end{theorem}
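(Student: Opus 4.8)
The plan is to exhibit an explicit path $N$ in $X$ and verify (i) that $(N,d_N)$ is quasi-isometric to $\R^+$, (ii) that the inclusion $N\hookrightarrow X$ is a uniform embedding, and (iii) that $X-N_R(N)$ has at least two deep components for some $R$. The natural candidate is the half-line traced out by the configurations in which all lamps are off and the cursor sits at positions $0,1,2,3,\dots$; concretely, $N$ is the subray of the $\langle t\rangle$-axis given by the vertices $t^n$ for $n\ge 0$, together with the edges between consecutive ones. Since $d(t^m,t^n)=|m-n|$ in $X$ (moving the cursor with no lamp-switching is geodesic), the path metric $d_N$ and the ambient metric $d$ agree on $N$, so $N$ is isometric to $\R^+$ as a path and the inclusion is trivially a uniform embedding. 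So steps (i) and (ii) are essentially free; the whole content is in the coarse separation.

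**The separation argument.**

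For coarse separation I would think of $X$ in terms of the lamp configurations: an element $g=(\varphi,k)\in L$ has a finite set $\mathrm{supp}(\varphi)\subset\Z$ of lit lamps and a cursor position $k$. I claim that $N_R(N)$, for suitable $R$, is coarsely the set of elements whose lit lamps and cursor all lie at positions $\ge -C$ for some constant $C$ depending on $R$: to be within distance $R$ of some $t^n$ with $n\ge 0$, the lamplighter must have travelled from position $n\ge 0$ to $k$ and lit all lamps in $\mathrm{supp}(\varphi)$ and returned the unlit ones, so every lit lamp and the cursor lie within $R$ of $[0,\infty)$, i.e. at positions $\ge -R$; conversely any configuration supported on $[-R,\infty)$ with cursor there is within $O(R)$ of some $t^n$. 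Then the two deep components are: the set $U^+$ of elements with some lit lamp far to the right (say, a lamp lit at position $\ge M$ for $M$ large), and the set $U^-$ of elements with some lit lamp far to the left (position $\le -M$). One checks $U^+$ and $U^-$ are not contained in any bounded neighbourhood of $N$ (lamps arbitrarily far out in either direction are arbitrarily far from $N$ — for $U^-$ because leftward lamps are far from $[0,\infty)$, for $U^+$ because a lamp lit way out at position $P$ forces word length $\ge P$ while the cursor could be back near $0$, pushing the element off every bounded neighbourhood of the ray... more carefully, one should choose $U^+$ to be, say, the configurations whose cursor is negative, which is symmetric with $U^-$), and that any path in $X$ from a vertex of $U^+$ to a vertex of $U^-$ must pass through $N_R(N)$, because to change the "side" on which the cursor lies the lamplighter must move the cursor across the origin region, and to change which side the lit lamps lie on it must sweep through the region near $0$.

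**Where the real work is.**

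Let me restate the clean version: take $U^+=\{(\varphi,k): k<0\ \text{or } \mathrm{supp}(\varphi)\cap(-\infty,0)\ne\emptyset\}$ is not quite right either; the honest choice that makes both sides deep and the separation clean is to use the \emph{two half-lines} $N^+ = \{t^n: n\ge 0\}$ and recognize that the complement of a neighbourhood of $N^+$ splits according to whether a configuration "reaches to the left of $-R$" or not — but a single half-quasi-line has a one-dimensional complement structure, so the two deep components must be distinguished by something other than left/right of a single point. The right picture: $N$ is the ray of cursor positions $0,1,2,\dots$ \emph{with all lamps off}; a configuration is close to $N$ iff it has \emph{no lit lamps at all} (up to bounded error $R$, iff it has at most $R$ lit lamps all within distance $R$ of the cursor and the cursor is $\ge -R$). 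The two deep components are then (a) configurations with a lit lamp far to the left, and (b) configurations with a lit lamp far to the right: any path between them must, at some vertex, have all lamps unlit in a large window around the origin while the cursor is near the origin, hence lie in $N_R(N)$. The main obstacle, and the step I would spend the most care on, is proving this "narrow neck" property: that every edge-path in $X$ joining a configuration lit only far-left to one lit only far-right contains a vertex within bounded distance of $N$. This is where one uses that switching a lamp at position $p$ requires the cursor to be at $p$, so the lamplighter physically carries the "lit region" continuously; formalizing the continuity/intermediate-value argument along the path — and pinning down the bound $R$ uniformly — is the crux. Showing the two components are deep (unbounded distance from $N$) is comparatively routine: a configuration with a lamp lit at position $\pm P$ has word length growing with $P$ while staying at bounded $d_N$-relevant data, so it escapes every $N_{R_1}(N)$.
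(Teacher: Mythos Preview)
Your candidate half-quasi-line, the bare ray $N=\{t^n:n\ge 0\}$, does \emph{not} coarsely separate $X$, so the entire separation argument collapses. Here is a concrete obstruction. For large $P$ set $u_P=(\{-P\},0)$ (lamp at $-P$, cursor at $0$) and $v_P=(\{P\},0)$; both satisfy $d(\,\cdot\,,N)\ge P+1$. Consider the path
\[
u_P \xrightarrow{\text{cursor}\to P}(\{-P\},P)\xrightarrow{\text{light }P}(\{-P,P\},P)\xrightarrow{\text{cursor}\to -P}(\{-P,P\},-P)\xrightarrow{\text{unlight }-P}(\{P\},-P)\xrightarrow{\text{cursor}\to 0}v_P.
\]
At every vertex on this path some lamp at $-P$ or the terminal pair $(\{P\},j)$ with $j\le 0$ is present, and a direct traveling-salesman estimate gives $d(\,\cdot\,,N)\ge P+1$ throughout. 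Hence for every $R$ the vertices $u_P$ and $v_P$ (with $P>R$) lie in the \emph{same} component of $X\setminus N_R(N)$. Your proposed ``narrow neck'' intermediate-value argument fails precisely here: the lamplighter can carry a sentinel lamp far to the left while rearranging lamps on the right, and this keeps him uniformly far from the all-lamps-off ray. More generally one checks that $X\setminus N_R(N)$ has a single deep component for every $R$.

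This is exactly why the paper's construction is so elaborate. The paper builds a half-quasi-line $N$ that, in particular, passes through \emph{every} configuration of the form ``cursor at $0$, left side empty, right side arbitrary'' (these are the $c_k$). The separation then becomes exact rather than coarse: starting from $a_n$ (cursor at $n>0$, lamps only in $[0,2n)$), the \emph{first} time the cursor reaches $0$ the left side is still empty (the cursor has never been at a negative position), so the configuration is literally some $c_k\in N$. No intermediate-value fudge is needed. The price is that one must then prove this convoluted path is uniformly embedded, which occupies most of the paper's argument; your ray gets uniform embedding for free but pays for it by failing to separate.
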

\begin{proof}
We describe $N$ using the geometric representation of the elements of $L$ described earlier. We split the proof in 3 parts. In the first part we define $N$. In the second part we show that $N$ is uniformly embedded. In the third part we show that it separates coarsely $X$.

(1) $N$ will be defined as a graph isomorphic to half-line given by a sequence of vertices of $X$. Using our geometric representation we may define $N$ as an infinite `trip' of the cursor (`lamplighter') which moves on the line, turning switches on and off.

We associate to each finite configuration $c$ of $0,1$'s on $\Z$ a dyadic integer $c^+$, where $c^+$ is the dyadic number we consider only the digits of $c$ which are right from $0$ or at $0$. Similarly we associate to $c$ the dyadic number $c^-$ which is the number which we read from $c$ if we start at $-1$ and move left. So for example if $c$ is the configuration:

$$c=...0,0,1,0,S0,1,1,0,0,...$$
where we denote by $S$ the origin, then $c^+=011=6$ and $c^-=10=2$.
If $c$ is a configuration we denote by $c(k)$ the digit at the position $k$.
So for instance in the example above we have $c(0)=0,c(-2)=1,c(1)=2$.

The first vertex of $N$ is the identity configuration: the lamplighter at $0$
and all lamps at the $0$ state (off). We describe now how the lamplighter moves.
At stage $n$ ($n\in \N$) the lamplighter is at the configuration $c_n$ where the lamplighter is at the origin, $c_n^+$ is the number $n$ written in dyadic and there are only $0$'s left of the lamplighter. Now to go from the stage $n$ to the stage $n+1$ the lamplighter does the following: If $c_n(i)=1$ for all $i$,
$0\leq i\leq k-1$ and $c_n(k)=0$ then the lamplighter moves left to $-k$, switches it to $1$ and moves back to the origin turning the switch at position
$-k+r$ so that it is the same as the switch at position $k+r$ (for all $r\leq k-1$). We denote the configuration abtained after this by
 $c_{n0}$. After that the lamplighter moves to the right switching as he moves all lamps that are on to off till he reaches the $k$-th lamp which is off and switches it on. If we denote by $c_{n1}$ the configuration that results from this, it is easy to see that $$c_{n1}^+=n+1$$
After that the lamplighter moves again left till he reaches the origin, after the origin he goes left turning off all lamps which are on, and after turning off the last lamp which is on, at position $-k$, returns to the origin. In this way the
lamplighter moves from $c_n$ to $c_{n+1}$. It is clear that the lamplighter does not pass twice from the same configuration, so $N$ is indeed a half line.

(2) We show now that $N$ is uniformly embedded. Let's denote by $N(i)$ the $i$-th vertex of $N$. It is enough to show that there
are no sequences $a_k=N(i_k),b_k=N(j_k)$ such that
$$|i_k-j_k|\to \infty $$ and
$$d(a_k,b_k)=M $$
for some fixed $M\in \N$.

It is enough to show the following: If $a=N(i),\, b=N(j)\, (i<j)$ and $d(a,b)\leq M$ then $j-i$ is bounded by a constant $D=D(M)$ which depends on $M$.
Let's denote by $l(a),l(b)$ the cursor positions for $a,b$ respectively.
We distinguish now some cases:

Case 1: $l(a),l(b)\in [-M,M]$. Then the configurations of $a,b$ are identical in $[2M,\infty ]$. This implies that when the we move from $N(i)$ to $N(j)$ the cursor moves in the interval $[-2M,2M]$. But then there is a bounded number of
new configurations that can be created so $j-i$ is bounded in this case by a constant $D_1=D_1(M)$.

Case 2: At least one of $l(a),l(b)$ lies in $[M+1,\infty )$. Then both $l(a),l(b)$ lie in $[1,\infty )$. It is clear that if there is some $n$ so that both $a,b$ are configurations that appear as we move from $c_n$ to $c_{n+1}$ then $j-i\leq M$. We assume therefore that $a$ is a configuaration that appears in the stage
$c_{k}$ to $c_{k+1}$, while $b$ is a configuration that appears in the stage $c_n$ to $c_{n+1}$ for some $n>k$. Clearly $|l(a)-l(b)|\leq M$. We recall now that as
the cursor moves from $c_{k0}$ to $c_{k+1}$ it erases a string of consecutive $1$'s. Let's say that the initial string of $1$'s of $c_k$ consists of $d_k$ $1$'s and the initial string of $1$'s of $c_n$ consists of $d_n$ $1$'s. If $d_k\ne d_n$ then $d(a,b)>M$. This is because, by the way they were
defined $c_{k0}^-$ and $c_{n0}^-$ differ at the position $min (-d_k,-d_n)$ (and it takes the cursor more than $M$ steps to change this, since one cursor is in
$[M+1,\infty )$).

We assume therefore that $d_k=d_n=d>M$. Since $d(a,b)=M$ we have that $c_{k0}^-=c_{n0}^-$. This however implies that the configurations $c_k$, $c_n$ are identical in all positions in $[0,2d]$. Since $d(a,b)=M$  we have also that
$c_k,c_n$ are identical in $[d+M,\infty )$. But $d>M$, so $c_n=c_k$, which is
a contradiction since we assumed that $n>k$.

Case 3: At least one of  $l(a),l(b)$ lies in $(\infty , -M-1]$.
As in case 2 we see that $a$ is a configuaration that appears in the stage
$c_k$ to $c_{k+1}$, while $b$ is a configuration that appears in the stage $c_n$ to $c_{n+1}$ for some $n>k$. However since $k\ne n$ we have that $c_k^+\ne c_n^+$. On the other hand one of $l(a),l(b)$ lies in $(\infty , -M-1]$. This implies that $d(a,b)>M$.

(3) We show finally that $N$ separates coarsely $X$. To show this it is enough to show that there are sequences of vertices $a_n,b_n$ in $X$ such that
$$d(a_n,N)\geq n,\,\, d(b_n,N)\geq n$$ and $a_n,b_n$ lie in distinct components of $X-N$.  We define $a_n$ to be the following configuration: $a_n^+$ consists of $2n$ consecutive $1$'s, $a_n^-$ consists of $0$'s and the cursor is at the
position $n$. $b_n$ is the configuration with $0$'s everywhere and the cursor at $-n$. It is clear now that to move from $a_n$ to $N$ the cursor has to erase
all $1$'s at the interval $[0,n]$, so $d(a_n,N)\geq n$. Similarly to move from $b_n$ to $N$ the cursor has to move from position $-n$ to $0$ so $d(b_n,N)\geq n$.

We show now that $a_n,b_n$ are separated by $N$. To move from $b_n$ to $a_n$ the cursor has to go through $0$. But no matter how the cursor switches lamps on and off before reaching $0$, when the cursor moves to $0$ we are exactly at the begining of the stage $c_k$, where $k$ is the integer whose dyadic expression we see on the right side. So any path joining $a_n,b_n$ in $X$ intersects $N$.

\end{proof}

It is not very hard to see that $X$ is separated also by a quasi-line. Indeed we define a quasi-line $R$ by $R(i)=N(i)$ for all $i\in \N$ and we define $R(-i)$ to be the cursor at $-i$ and the lamps in $[-i,0]$ are on (and all other lamps off). One sees easily that $R$ is uniformly embedded in $X$.
It is clear that if we take $a_n,b_n$ as in the previous proof we have that $d(a_n,R)\geq n,\, d(b_n,R)\geq n$ so $R$ coarsely separates $X$. That is we have the following:
\begin{corollary}\label{quasil}
There is a quasi-line $R$ which separates coarsely the Cayley graph
of $L$, $X$.
\end{corollary}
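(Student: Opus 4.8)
The plan is to obtain $R$ by attaching to the half-quasi-line $N$ of Theorem~\ref{halfq} a second ray running to the left. Keep $R(i)=N(i)$ for $i\ge 0$, and let the negative part of $R$ be the trip of the lamplighter which, starting from the identity, walks to $-1,-2,\dots$ and switches on each lamp it leaves behind at a negative position, so that for every $i\ge1$ the configuration with the lamplighter at $-i$ and exactly the lamps $-1,\dots,-i$ lit is a vertex of $R$. (It is convenient \emph{not} to light the lamp at $0$ along this ray; otherwise some of its vertices would already occur along $N$.) Then $R$ is a bi-infinite path in $X$, and it is embedded: $N$ does not repeat by Theorem~\ref{halfq}, the negative ray does not repeat because its successive lamplighter positions are distinct, and no vertex of the negative ray other than the identity lies on $N$ --- such a vertex either is lampless (but the only lampless configuration of $N$ is the identity) or has all of its lit lamps at negative positions with the lamplighter at a negative position, a combination that never occurs on $N$ (whenever the lamplighter of $N$ is at a negative position the block $[0,k-1]$ or the lamp at $k$ of the current stage is lit, and at the ``dip'', when neither is, the lamplighter sits at a positive position). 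Hence $(R,d_R)$ is quasi-isometric to $\R$.

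Next I would check that the inclusion $(R,d_R)\hookrightarrow(X,d)$ is a uniform embedding, using the criterion from the preliminaries: no two sequences of vertices of $R$ can have $d_R$-distances tending to infinity while their $X$-distances stay bounded by a fixed $M$. Split according to whether the two vertices lie both on $N$, both on the negative ray, or one on each. The first case is precisely part~(2) of the proof of Theorem~\ref{halfq}. The second is immediate, since negative-ray vertices with lamplighter positions $-p$ and $-q$ are at $X$-distance $\ge|p-q|$ and $d_R$-distance $O(|p-q|)$. The straddling case is the only real content. Let $u$ be a negative-ray vertex whose lamplighter is at $-s$ and $v=N(j)$ with $d(u,v)\le M$; I claim $s$, and then $j$, are bounded in terms of $M$. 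For $s$: run through the phases of a single stage $c_m\to c_{m+1}$ of $N$ and check that whenever the lamplighter of $N$ is at a position $-s$ the configuration $v$ disagrees with $u$ at at least $s-O(M)$ lamp positions, spread across an interval of length $s-O(M)$ --- either because the block $[0,k-1]$ is still lit and $k\ge s$ while $u$ lights nothing at positions $\ge 0$, or, in the final left sweep after the block has been erased and the lamp at $k$ lit, because the lamps in $[-s,-1]$ have just been turned off in $v$ while $u$ lights all of them; in each case the lamplighter must travel across that whole interval, so $d(u,v)\ge s-O(M)$, forcing $s=O(M)$. Since $u$ and $v$ have lamplighter positions within $M$ of each other and $v$ differs from $u$ at $\le M$ lamps, the support and lamplighter of $v$ now lie in a window of width $O(M)$; but a configuration of $N$ supported in a bounded window can occur only while the stage number $m$ (equivalently the number read off on the right) is bounded, and each stage supplies only finitely many configurations, so there are only finitely many candidates for $v$ and $j=O(M)$. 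I expect this straddling case to carry essentially all the work --- there is no conceptual difficulty, but one must keep track of the several phases of a stage of $N$.

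Finally, for coarse separation I would reuse the sequences $a_n,b_n$ of part~(3) of the proof of Theorem~\ref{halfq}: $a_n$ has the lamplighter at $n$ with $[0,2n-1]$ lit, and $b_n$ has the lamplighter at $-n$ with nothing lit. Since $N\subseteq R$ we already have $d(a_n,N)\ge n$ and $d(b_n,N)\ge n$, and the distance from $a_n$ or from $b_n$ to any vertex of the negative ray also grows at least linearly in $n$ (from $a_n$ the lamplighter must be dragged from $n$ past $0$; from $b_n$ one must light an interval of roughly $n$ lamps), so $d(a_n,R)\to\infty$ and $d(b_n,R)\to\infty$. Next, every path in $X$ from $b_n$ to $a_n$ meets $N$: at the last moment such a path has its lamplighter at $0$, all lamps at positions $\le 0$ are already those of $a_n$ --- they never change afterwards --- so the lamps to the right read off the dyadic expansion of some $k$ and all lamps to the left are off, i.e.\ that configuration is $c_k\in N$. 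Hence $a_n$ and $b_n$ lie in distinct components of $X\setminus R$. That these are two \emph{deep} components follows because all the $b_n$ (for $n$ large) lie in one component of $X\setminus R$, joined consecutively by the single edge moving the lamplighter from $-n$ to $-n-1$, whose endpoints --- lamplighter at a negative position, nothing lit --- are not on $R$; while all the $a_n$ lie in another component, joined consecutively by the obvious path that extends the lit block to the right and slides the lamplighter back, every configuration of which has the lamplighter at a positive position and an initial interval of lamps lit (in particular the lamp at $0$ is lit) and so is not on $R$. As $d(a_n,R)\to\infty$ and $d(b_n,R)\to\infty$, both components are deep, so $R$ coarsely separates $X$; together with the first paragraph this proves the corollary.
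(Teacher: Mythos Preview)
Your approach is essentially the paper's: extend the half-quasi-line $N$ by a leftward ray in which the lamplighter walks to $-1,-2,\dots$ lighting lamps, and reuse the same witnesses $a_n,b_n$ from Theorem~\ref{halfq} for coarse separation. The paper states this in one sentence (with the lamps in $[-i,0]$ lit rather than $[-i,-1]$) and asserts without argument that $R$ is uniformly embedded; you supply the case analysis the paper omits, and your choice not to light the lamp at $0$ is a genuine, if minor, improvement---with the paper's convention the vertex ``cursor at $-1$, lamps $\{-1,0\}$ lit'' already occurs on $N$ during the stage $c_1\to c_2$, so the two rays would overlap. One small inaccuracy: your estimate $d(b_n,\text{negative ray})\to\infty$ is correct, but the stated reason (``one must light an interval of roughly $n$ lamps'') is not---the lampless vertex with cursor at $-1$ lies on your negative ray, so the distance is governed by the cursor displacement $n-1$, not by lamp-lighting.
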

It is easy to see that $L$ is one-ended and does not split over a 2-ended group. Indeed assume that $L$ acts on a tree $T$. Then if we see $L$ as a semi-direct product
$$L=(\oplus _{ \Z}\Z _2)\rtimes \Z$$ the direct sum $\oplus _{ \Z}\Z _2$ fixes some vertex $v$ of the tree. If the generator $t$ of $\Z$ sends $v$ to $t(v)\ne v$ then since $tx_it^{-1}=x_{i+1}$ (where $x_i$ is the generator of the $i$-th $\Z _2$ factor) we have that $\oplus _{ \Z}\Z _2$ fixes also $t(v)$, so there is an edge fixed by $\oplus _{ \Z}\Z _2$. Clearly if $t(v)=v$ then $L$ fixes a vertex of the tree. In both cases we see that there is no action with 2-ended edge stabilizers.
\subsection {Quasi-circles}
We show now that there is a sequence of quasi-circles that separate coarsely the Cayley graph of $L$, $X$. In fact we show something slightly stronger: a sequence of
quasi-intervals coarsely separate $X$. The construction of the quasi-circles (or quasi-intervals) is similar to the construction of $N$.

We fix $n\in \N$. We define now a simple path $I_n$ and a simple closed path $C_n$ such that
there are $a_n,b_n\in X$ which are separated by $I_n$ ($C_n$) and such that
$$d(a_n,I_n)>n,\, d(b_n,I_n)>n,\, d(a_n,C_n)>n,\, d(b_n,C_n)>n$$
The interval $I_n$ is defined as follows: We think of $I_n$ as a
path $I_n(i)$, $i\in \N$. If $N$ is the half-quasiline defined in the previous section we set $I_n(i)=N(i)$ for all $i$ till we reach the configuration
consisting of $1$'s in the positions $0,1,...,2n$, the cursor at the origin and $0$ everywhere else. After this the lamplighter moves
from position $0$ to position $2n$ switching off the lamps as he goes. Let's say that $I_n(k)$ is the vertex of $X$ that we get when the
lamplighter reaches $2n$. 

When the lamplighter reaches $2n$ he repeats exactly the
steps that he did at the beginning except he walks in the inverse
direction. More formally, let's denote by $q(i)$ the walk of the
lamplighter, where for any $i$, $q(i)$ is either a move by one to
the right denoted by $+1$ or a move to the left, denoted by $-1$
or a change of switch denoted by $0$. Then if $q(k)$ is the
lamplighter at $I_n(k)$ we define $q(k+i)=-q(i)$. The lamplighter
continues till he reaches the following configuration: lamplighter
at $2n$ and configuration consisting of $1$'s in the positions
$0,1,...,2n$, and $0$ everywhere else.
We will show now that the sequence $I_n$ is a sequence of quasi-intervals.
In order to analyse distortion we think of $I_n$ as constructed in 3 stages.
So we write $$I_n=I_{n1}\cup I_{n2}\cup I_{n3}$$ where
$I_{n1}, I_{n2}, I_{n3}$ are defined as follows: $I_{n1}$ is the initial path,
so $I_{n1}=I_n\cap N$. $I_{n3}$ is the last stage where the lamplighter moves
exactly as in $I_{n1}$ but in the inverse direction and starting from $2n$ rather than $0$. Finally $I_{n2}$ is the path of $I_n$ joining $I_{n1},I_{n3}$.
In order to show that $I_n$ is a sequence of quasi-intervals it is enough
to show that there are no sequences $a_n,b_n\in I_n$ such that
$$a_n=I_n(i_n),\, b_n=I_n(j_n)$$
with $$\limsup _{n\to \infty} |i_n-j_n|=\infty $$
and $d(a_n,b_n)=M$ for some fixed $M\in \N$.
Since $N$ is uniformly embedded in $X$ it is clear that it is not possible that $a_n,b_n$ are contained both in $I_{n1}$ or in $I_{n3}$. Since $I_{n2}$ is a geodesic path it follows that $a_n,b_n$ can not be contained both in $I_{n2}$ either.
Finally since $d(I_{n1},I_{n3})>n$ it is not possible that one of the two is contained in $I_{n1}$ and the other in $I_{n3}$. So there are 2 cases left:

Case 1: $a_n\in I_{n1},\, b_n\in I_{n2}$.

Case 2: $a_n\in I_{n2},\, b_n\in I_{n3}$.

Assume now that we are in case 1 and consider the geometric
representation of $a_n,b_n$. If the cursor for $b_n$ is at $[0,M]$ then,
since $d(a_n,b_n)\leq M$ the configurations for $a_n,b_n$ are identical in
$[2M,\infty )$. But, by the definition of $I_n$, this implies that $j_n-i_n$ is bounded by some constant $D(M)$ depending only on $M$. If the cursor for $b_n$ lies in $[M+1,2n)$ we remark that either the cursor for $a_n$ is at $0$, so
$d(a_n,b_n)>M$, or the configuration of $a_n$ has some lamp on in $[-2n,1]$. However in this case too $d(a_n,b_n)>M$.

We assume now that we are in case 2. If the cursor for $a_n$ lies in
$[0,2n-M-1]$ then clearly $d(a_n,b_n)>M$. If the cursor for $a_n$ lies in
$[2n-M,2n]$ then the configuration of $b_n$ has to be identical with the configuration of $a_n$ in $[0,2n-M]$ (since $d(a_n,b_n)=M$). However this implies that $j_n-i_n$ is smaller than a constant $D(M)$ that depends only on $M$. We see in both cases that it is impossible to have
$$\limsup _{n\to \infty} |i_n-j_n|=\infty $$
so $I_n$ is indeed a sequence of quasi-intervals.

We show now that there are $x_n,y_n$ in $X$ such that $$d(x_n,I_n)\geq n,\,d(y_n,I_n)\geq n$$ and $x_n,y_n$ lie in distinct components of $X-I_n$.
We take $x_n$ to be the following configuration: cursor at $n$, $1$'s at the interval $[0,2n]$ and $0$'s everywhere else. We take $y_n$ to be the cursor
at $-n$ and $0$ everywhere. By the way they are defined $x_n,y_n$ satisfy
$d(x_n,I_n)\geq n,\,d(y_n,I_n)\geq n$. It is clear that to move from
$x_n$ to $y_n$ one has to go through some vertex in $I_{n1}\cup I_{n3}$.
This shows that the sequence of quasi-intervals $I_n$ coarsely separates $X$.

It is easy to define a sequence of quasi-circles $C_n$ which coarsely separates $X$, using the quasi-intervals $I_n$. Namely we join the endpoint of
$I_n$ to the initial point as follows: The lamplighter moves from $2n$ back to $0$ turning off as he goes all lamps in the interval $[0,2n]$. One sees as before that the $C_n$'s are uniformly embedded (i.e. it is a sequence of quasi-circles). If we pick $x_n,y_n$ as before we see that 
$$d(x_n,C_n)\geq n,\,d(y_n,C_n)\geq n$$ and $x_n,y_n$ are separated by $C_n$ since they are already separated by $I_n\subset C_n$.

\section{Discussion}
The idea behind the coarse separation properties of the lamplighter group
comes from the fact that the asymptotic dimension of the lamplighter group is 1
(\cite{Ge}). So one can think of the lamplighter group as a group theoretic (or `asymptotic') analog of the Menger curve. It is easy to see that the Menger curve is separated by simple paths (which is not very surprising in itself, since the topological dimension of the Menger curve is 1 it is clearly separated by a totally disconnected set). This remark prompts us to reformulate
some of the questions treated in this paper.

\begin{question} 
Let $G$ be a one-ended finitely generated group and let $X$ be the Cayley graph of $G$. Assume that no subset of asymptotic dimension $0$ of $X$ coarsely separates $X$.
Is it true that no sequence of quasi-circles coarsely separates $X$? Is it true that no half-quasi-line coarsely separates $X$?
\end{question}


\begin{thebibliography}{9}

\bibitem{Bo} B.H. Bowditch,  {\em Cut points and canonical splittings
of hyperbolic groups}, Acta Math. 180, No.2, pp.145-186  (1998)

\bibitem{Bos} B.H. Bowditch,  {\em Planar groups and the Seifert conjecture} Journal für die reine und angewandte Mathematik (Crelles Journal). Volume 2004, Issue 576, 11–62.

\bibitem{CT} S Cleary, J Taback,  {\em Dead end words in lamplighter groups and other wreath products}, Quarterly Journal of Mathematics, Vol. 56, Number 2, June 2005, pp. 165-178(14)

\bibitem{Dr} A. Dranishnikov, {\em Asymptotic Topology}, Russian Math. Surveys 55, (2000), no 6, 71-116.

\bibitem{Ge} T. Gentimis {\em Asymptotic dimension of finitely presented groups}
Proc. Amer. Math. Soc. 136 (2008), 4103-4110. 


\bibitem{Gr} M.Gromov, {\em Asymptotic invariants of
infinite groups}, in Geometric group theory, (G.Niblo, M.Roller,
Eds.), LMS Lecture Notes, vol. 182, Cambridge Univ. Press, 1993


%
%
\bibitem{KK} M. Kapovich, B. Kleiner, {\em Coarse Alexander
duality and duality groups}, J. Differential Geom. 69,
no. 2, 279--352, 2005

\bibitem{Kl} B. Kleiner, {\em in preparation}.

\bibitem{Me} G. Mess, {\em The Seifert conjecture and groups which are coarse quasiisometric to planes} preprint, 1990

\bibitem{Pa} P. Papasoglu, {\em Quasi-isometry invariance of group splittings},
Annals of Math v.161, n. 2, p.759--830 (2005)

\bibitem{Pq} P. Papasoglu, {\em JSJ decompositions} in Problems in Geometric group theory ,http://aimath.org/pggt

\bibitem{St}
 J.R. Stallings,  {\em On torsion-free groups with infinitely
many ends}  Ann. of Math. 88, p. 312-334 (1968)

\end{thebibliography}
\end{document}